\newtheorem{theorem}{Theorem}
\newtheorem{lemma}{Lemma}
\newtheorem{corollary}{Corollary}
\newcommand{\GL}{{\rm GL}}
\newcommand{\SL}{{\rm SL}}
\newcommand{\PD}{{\rm PD}}
\newcommand{\ind}{{\rm ind}}
\title[Multiplicity formula]{Multiplicity formula for restriction of  representations of $\widetilde{\GL_{2}}(E)$ to $\widetilde{\SL_{2}}(E)$}
\author{Shiv Prakash Patel}
\address{School of Mathematics, Tata Institute of Fundamental
Research, Homi Bhabha Road, Colaba, Mumbai 400005, India}
\email{shiv@math.tifr.res.in}
\author{Dipendra Prasad}
\address{School of Mathematics, Tata Institute of Fundamental
Research, Homi Bhabha Road, Colaba, Mumbai 400005, India}
\email{dprasad@math.tifr.res.in}
\subjclass[2010]{Primary 22E35; Secondary 22E50}
\keywords{Covering groups, multiplicity formula, restriction of representations}
\date{\today}
\begin{document}

\begin{abstract}
In this note we prove a certain multiplicity formula regarding the restriction of an irreducible admissible genuine representation of a 2-fold cover $\widetilde{\GL}_{2}(E)$ of $\GL_{2}(E)$ to the 2-fold cover $\widetilde{\SL}_{2}(E)$ of $\SL_{2}(E)$, and find in particular that this multiplicity may not be one, a result that seems to have been noticed before. The proofs follow the standard path via Waldspurger's analysis of theta correspondence between $\widetilde{\SL}_{2}(E)$ and ${\rm PGL}_{2}(E)$.
\end{abstract}
\maketitle
\section{Introduction}
This paper will be concerned with certain 2-fold covers of $\GL_{2}(E)$ to be called the metaplectic covering of $\GL_{2}(E)$, where $E$ is a non-Archimedian local field. We recall that there is a unique (up to isomorphism) non-trivial 2-fold cover of $\SL_{2}(E)$ called the metaplectic cover and denoted by $\widetilde{\SL}_{2}(E)$ in this paper, but there are many inequivalent 2-fold coverings of $\GL_{2}(E)$ which extend this 2-fold covering of $\SL_{2}(E)$.  We fix a covering of $\GL_{2}(E)$ as follows. Observe that $\GL_{2}(E)$ is the semi-direct product of $\SL_{2}(E)$ and $E^{\times}$, where $E^{\times}$ sits inside $\GL_{2}(E)$ as $e \mapsto \left( \begin{matrix} e & 0 \\ 0 & 1 \end{matrix} \right)$. This action of $E^{\times}$ on $\SL_{2}(E)$ lifts uniquely to an action of $E^{\times}$ on  $\widetilde{\SL}_{2}(E)$. Denote $\widetilde{\GL}_{2}(E) = \widetilde{\SL}_{2}(E)  \rtimes E^{\times}$ and call this the metaplectic cover of $\GL_{2}(E)$. Thus the metaplectic cover of $\GL_{2}(E)$ that we consider in this paper is that cover of $\GL_{2}(E)$ which extends the metaplectic cover of $\SL_{2}(E)$ and is further split on the subgroup $\left\{ \left( \begin{matrix} e & 0 \\ 0 & 1 \end{matrix} \right) : e \in E^{\times} \right\}$. Moreover, we have the following short exact sequence of locally compact topological groups
\[
1 \longrightarrow \{ \pm 1 \} \rightarrow \widetilde{\GL}_{2}(E) \xrightarrow{p} \GL_{2}(E) \rightarrow 1.
\]

For any subset $X$ of $\GL_{2}(E)$ we write $\tilde{X}$ for its inverse image in $\widetilde{\GL}_{2}(E)$. Let $Z$ be the center of $\GL_{2}(E)$ which we identify with $E^{\times}$. It can be checked that $\tilde{Z}$ is an abelian subgroup of $\widetilde{\GL}_{2}(E)$ but is not the center of $\widetilde{\GL}_{2}(E)$; the center of $\widetilde{\GL}_{2}(E)$ is $\tilde{Z^2}$. The centralizer of $\widetilde{\SL}_{2}(E)$ inside $\widetilde{\GL}_{2}(E)$ is $\tilde{Z}$. Let $\widetilde{\GL}_{2}(E)_{+} = \tilde{Z} \cdot \widetilde{\SL}_{2}(E)$. Let $\mu$ be a genuine character of $\tilde{Z}$ and $\tau$ an irreducible admissible genuine representations of $\widetilde{\SL}_{2}(E)$. We say that $\mu$ and $\tau$ are compatible if $\mu|_{\widetilde{ \{ \pm 1 \} }} = \omega_{\tau}$ where $\omega_{\tau}$ is the central character of $\tau$ and if so, we define a representation $\mu \tau$ of $\widetilde{\GL}_{2}(E)_{+}$ whose restriction to $\widetilde{\SL}_{2}(E)$ is $\tau$ and central character is $\mu$. One may choose the representatives of the quotient $\widetilde{\GL}_{2}(E)/\widetilde{\GL}_{2}(E)_{+} \cong E^{\times}/E^{\times 2}$ to be $g(a) := \left( \begin{matrix} a & 0 \\ 0 & 1 \end{matrix} \right)$ for $a \in E^{\times}$ representing a coset of $E^{\times 2}$. We write $(\mu \tau)^{a}$ for the conjugate representation of $\mu \tau$ by the element $g(a)$. Since the quadratic Hilbert symbol is non-degenerate, if $a \in E^{\times} -E^{\times 2}$ then $\mu \neq \mu^{a}$ where $\mu^{a}(\tilde{z}) = \mu(\tilde{z}) (a, z)$ with $z=p(\tilde{z})$. It follows that if $a \in E^{\times} - E^{\times 2}$ then $\mu \tau \ncong (\mu \tau)^{a}$, indeed the central characters $\mu \tau$ and $(\mu \tau)^{a}$ are different. By Clifford theory, $\tilde{\pi} := \ind_{\widetilde{\GL}_{2}(E)_{+}}^{\widetilde{\GL}_{2}(E)} (\mu \tau)$ is an irreducible admissible genuine representation of $\widetilde{\GL}_{2}(E)$. Moreover, every irreducible admissible genuine representation of $\widetilde{\GL}_{2}(E)$ arises in this fashion. If $\tilde{\pi} = \ind_{\widetilde{\GL}_{2}(E)_{+}}^{\widetilde{\GL}_{2}(E)} (\mu \tau)$ then by Mackey theory it is easy to see that 
\begin{equation} \label{res:GL to GL+}
\tilde{\pi}|_{\widetilde{\GL}_{2}(E)_{+}} = \bigoplus_{a \in E^{\times}/E^{\times 2}} (\mu \tau)^{a},
\end{equation}
Since $(\mu \tau)^{a} \ncong (\mu \tau)^{b}$ if $ab^{-1} \notin E^{\times 2}$, the restriction of $\tilde{\pi}$ to $\widetilde{\GL}_{2}(E)_{+}$ is multiplicity free. Further restriction of $\tilde{\pi}$ to $\widetilde{\SL}_{2}(E)$ is given by
\begin{equation}
\tilde{\pi}|_{\widetilde{\SL}_{2}(E)} = \bigoplus_{a \in E^{\times}/E^{\times 2}} \tau^{a}.
\end{equation}
This identification of $\tilde{\pi}$ restricted to $\widetilde{\SL}_{2}(E)$ allows us to study the multiplicity of the restriction of a representation of $\widetilde{\GL}_{2}(E)$ restricted to $\widetilde{\SL}_{2}(E)$, and in particular shows that it may be greater than one.

\section{$\theta$-correspondence and Waldspurger involution}
In this section, we recall some results of Waldspurger from \cite{Wald91}, related to $\theta$-correspondence between $\widetilde{\SL}_{2}(E)$ and ${\rm PGL}_{2}(E) = {\rm SO}(2,1)$ and that between $\widetilde{\SL}_{2}(E)$ and $\PD^{\times} = {\rm SO}(3)$, where ${\rm D}$ is the unique quaternion division algebra over $E$. We will use these results repeatedly. \\

Now fix a non-trivial additive character $\psi$ of $E$. With respect to this $\psi$, one has the $\theta$-correspondence between irreducible admissible genuine representations of $\widetilde{\SL}_{2}(E)$ and irreducible admissible representations of ${\rm PGL}_{2}(E)$ 
\[ \xymatrix{
{\rm Irr}(\widetilde{\SL}_{2}(E)) \ar[r]^{\theta(-, \psi)} & {\rm Irr}({\rm PGL}_{2}(E)) },
\]
as well as one between irreducible admissible genuine representations of $\widetilde{\SL}_{2}(E)$ and irreducible admissible representations of $\PD^{\times}$ 
\[ \xymatrix{
{\rm Irr}(\widetilde{\SL}_{2}(E)) \ar[r]^{\theta(-,\psi)} & {\rm Irr}(\PD^{\times}).  }
\]
This correspondence $\tau \mapsto \theta(\tau,\psi)$ depends on $\psi$ and will be abbreviated to $\tau \mapsto \theta(\tau)$ as $\psi$ will be fixed. The $\theta$-correspondence between $\widetilde{\SL}_{2}(E)$ and ${\rm PGL}_{2}(E)$ gives a one to one mapping from the subset of irreducible admissible genuine representations of $\widetilde{\SL}_{2}(E)$ which have $\psi$-Whittaker model onto all irreducible admissible representations of ${\rm PGL}_{2}(E)$. Similarly, $\theta$-correspondence between $\widetilde{\SL}_{2}(E)$ and $\PD^{\times}$ gives a one to one mapping from the subset of irreducible admissible genuine representations of $\widetilde{\SL}_{2}(E)$ which do not have $\psi$-Whittaker model onto all irreducible representations of $\PD^{\times}$. Thus $\theta$-correspondence defines a bijection (which depends on the choice of $\psi$):
\begin{equation}
{\rm Irr}(\widetilde{\SL}_{2}(E)) \longleftrightarrow {\rm Irr}({\rm PGL}_{2}(E)) \, \bigsqcup \, {\rm Irr}(\PD^{\times}).
\end{equation}
Now we can describe the Waldspurger involution \cite{Wald91} $W : {\rm Irr}(\widetilde{\SL}_{2}(E)) \rightarrow {\rm Irr}(\widetilde{\SL}_{2}(E))$ which is defined using 
\begin{enumerate} 
\item the $\theta$-correspondence from $\widetilde{\SL}_{2}(E)$ to ${\rm PGL}_{2}(E)$, 
\item the $\theta$-correspondence from $\widetilde{\SL}_{2}(E)$ to $\PD^{\times}$ and 
\item the Jacquet-Langlands correspondence between representations of ${\rm PGL}_{2}(E)$ and $\PD^{\times}$, and makes the following diagram commutative:
\end{enumerate} 
\[
\xymatrix{
\widetilde{\SL}_{2}(E) \ar@{<->}[d]_{W} \ar[r]^{\theta} & {\rm PGL}_{2}(E) \ar@{<->}[d]^{J-L} \\
\widetilde{\SL}_{2}(E) \ar[r]^{\theta} & \PD^{\times} 
} \label{diagram:involution}
\]
This involution is defined on the set of all representations of $\widetilde{\SL}_{2}(E)$ whose fixed points are precisely the irreducible admissible genuine representations which are not discrete series representations. Denote this involution by $\tau \mapsto \tau_{W}$. This involution is independent of the character $\psi$ chosen to define it.\\

The following theorem summarizes some of the results of Waldspurger from \cite{Wald91} which are relevant to our analysis. This theorem is in terms of the local $\epsilon$-factors of Jacquet-Langlands, which we will use without reviewing.	

\begin{theorem} \label{theorem:A}
Let $\tau$ be an irreducible admissible genuine representation of $\widetilde{\SL}_{2}(E)$. Let $\psi$ be a non-trivial additive character of $E$. For $a \in E^{\times}$, let $\chi_{a}$ be the quadratic character of $E^{\times}$ defined by $\chi_{a}(x)=(a,x)$ where $(-,-)$ denotes the Hilbert symbol with values in $\{ \pm 1 \}$. Both the representations $\tau$ and $\tau^{a}$ of $\widetilde{\SL}_{2}(E)$ are in the domain of theta correspondence (with respect to the character $\psi$) either with ${\rm PGL}_{2}(E)$ or with $\PD^{\times}$ if and only if 
\[
\epsilon(\theta(\tau) \otimes \chi_{a}) = \chi_{a}(-1) \epsilon(\theta(\tau)),
\]
 and then 
 \[
 \theta(\tau^{a}) \cong \theta(\tau) \otimes \chi_{a}.
 \]
If $\epsilon(\theta(\tau) \otimes \chi_{a}) = - \chi_{a}(-1) \epsilon(\theta(\tau))$, and  if $\theta(\tau)$ is a representation of ${\rm PGL}_{2}(E)$ then $\theta(\tau^{a})$ is a representation of $\PD^{\times}$ and vice-versa, and  
\[
\theta(\tau^{a}) = \theta(\tau)^{JL} \otimes \chi_{a}.
\]
\end{theorem}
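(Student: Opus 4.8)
The plan is to trade the conjugation by $g(a) = \left( \begin{matrix} a & 0 \\ 0 & 1 \end{matrix} \right)$ for a rescaling of the additive character defining the $\theta$-correspondence, and then to feed the result into Waldspurger's description of how the correspondence varies with $\psi$. First I would record the effect of this conjugation at the level of Whittaker models. Writing $n(x) = \left( \begin{matrix} 1 & x \\ 0 & 1 \end{matrix} \right)$ and $\psi_{a}(x) = \psi(ax)$, one has $g(a) n(x) g(a)^{-1} = n(ax)$, so that $\tau^{a}(n(x)) = \tau(n(ax))$; hence a $\psi$-Whittaker functional for $\tau^{a}$ is the same as a $\psi_{a^{-1}}$-Whittaker functional for $\tau$. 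Since conjugation by the diagonal torus shows that the existence of a $\psi_{b}$-Whittaker model depends only on the square class of $b$, this gives the clean statement: $\tau^{a}$ has a $\psi$-Whittaker model if and only if $\tau$ has a $\psi_{a}$-Whittaker model. In particular the assertion ``$\tau$ and $\tau^{a}$ both lie in the domain of $\theta$ towards ${\rm PGL}_{2}(E)$ (respectively both towards $\PD^{\times}$)'' is equivalent to ``$\tau$ has a $\psi$-Whittaker model if and only if it has a $\psi_{a}$-Whittaker model.''

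Next I would use the compatibility of the $\theta$-correspondence with rescaling of $\psi$, namely $\theta_{\psi}(\tau^{a}) \cong \theta_{\psi_{a}}(\tau)$, which reflects the fact that the Weil representation attached to $\psi_{a}$ is the one attached to $\psi$ after scaling the quadratic form by $a$, together with the square-class invariance $\theta_{\psi_{a}} = \theta_{\psi_{a^{-1}}}$. This reduces the theorem to understanding $\theta_{\psi_{a}}(\tau)$ in terms of $\theta_{\psi}(\tau)$. On the orthogonal side, scaling the three-dimensional quadratic space by $a$ identifies its special orthogonal group with the original one up to an automorphism which, on representations, is the twist by $\chi_{a}$; but this scaling also changes the Hasse invariant, so the scaled space may be the other member of the pair $\{ {\rm PGL}_{2}(E), \PD^{\times} \}$. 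Thus $\theta_{\psi_{a}}(\tau)$ is always the $\chi_{a}$-twist of $\theta_{\psi}(\tau)$ as an abstract representation, and it lives on the same group as $\theta_{\psi}(\tau)$ or on the other one, in the latter case after applying the Jacquet--Langlands correspondence. This already produces the two candidate formulas $\theta(\tau^{a}) \cong \theta(\tau) \otimes \chi_{a}$ and $\theta(\tau^{a}) = \theta(\tau)^{JL} \otimes \chi_{a}$.

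It remains to decide, in terms of $\epsilon$-factors, which of the two occurs, and this is the heart of the argument. By Waldspurger's theta dichotomy the group carrying $\theta_{\psi}(\tau)$ is detected by an appropriate local root number of the two-dimensional parameter of $\theta(\tau)$ with respect to $\psi$; applying the same criterion with $\psi$ replaced by $\psi_{a}$ forces one to compare $\epsilon(\theta(\tau) \otimes \chi_{a})$ with $\epsilon(\theta(\tau))$. Two inputs enter: the transformation law $\epsilon(1/2, \phi, \psi_{a}) = (\det \phi)(a)\, \epsilon(1/2, \phi, \psi)$, which is trivial here because $\theta(\tau)$ has trivial determinant, and the change of Hasse invariant of the three-dimensional space under scaling by $a$, which contributes exactly the factor $\chi_{a}(-1)$. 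Carrying this through shows that no switch occurs --- so $\tau$ and $\tau^{a}$ lift to the same group and $\theta(\tau^{a}) \cong \theta(\tau) \otimes \chi_{a}$ --- precisely when $\epsilon(\theta(\tau) \otimes \chi_{a}) = \chi_{a}(-1)\, \epsilon(\theta(\tau))$, while a switch occurs, giving $\theta(\tau^{a}) = \theta(\tau)^{JL} \otimes \chi_{a}$, in the remaining case $\epsilon(\theta(\tau) \otimes \chi_{a}) = -\chi_{a}(-1)\, \epsilon(\theta(\tau))$. The main obstacle is exactly this last computation: assembling Waldspurger's root-number dichotomy, the scaling behaviour of the quadratic space, and the twisting law for $\epsilon$-factors so that the sign emerges as $\chi_{a}(-1)$ rather than merely $\pm 1$; everything else is the bookkeeping of conjugation versus change of $\psi$.
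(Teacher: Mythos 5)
The paper never proves Theorem~\ref{theorem:A}: it is explicitly presented as a summary of results from \cite{Wald91} and used as a black box, so there is no internal argument to measure yours against. Your outline is, in substance, the standard way Waldspurger's results get assembled into this statement, and the first two steps are genuine added value over a bare citation: the identity $g(a)n(x)g(a)^{-1}=n(ax)$ correctly converts conjugation by $g(a)$ into replacement of $\psi$ by $\psi_{a}$, both for Whittaker functionals and for theta lifts via $\theta_{\psi}(\tau^{a})\cong\theta_{\psi_{a}}(\tau)$, with the square-class invariance disposing of $a$ versus $a^{-1}$. You should be candid, though, that the step you yourself call the heart of the argument --- the dichotomy deciding whether $\theta_{\psi_{a}}(\tau)$ lives on ${\rm PGL}_{2}(E)$ or on $\PD^{\times}$, encoded in the criterion $\epsilon(\theta(\tau)\otimes\chi_{a})=\pm\chi_{a}(-1)\,\epsilon(\theta(\tau))$ --- is itself one of Waldspurger's main local theorems (the $\epsilon$-factor criterion for the existence of $\psi_{a}$-Whittaker models on the Shimura correspondence), which your sketch invokes rather than derives; as written, your proof is a reduction to \cite{Wald91}, i.e.\ exactly the paper's own level of rigor, just with the bookkeeping made explicit.

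One conceptual slip worth fixing: scaling the three-dimensional quadratic form by $a$ does not change its special orthogonal group (${\rm SO}(aq)={\rm SO}(q)$ as subgroups of the linear group of the underlying space), so the possible jump from ${\rm PGL}_{2}(E)$ to $\PD^{\times}$ is \emph{not} because ``the scaled space may be the other member of the pair.'' It comes from the dichotomy itself: for each additive character a given $\tau$ has a nonzero lift to exactly one of the two groups, and which one can change when $\psi$ is replaced by $\psi_{a}$; the $\epsilon$-factor condition is precisely the detector of that change. The sign $\chi_{a}(-1)=(a,a)$ does arise from the Hasse-invariant/Weil-index bookkeeping as you indicate, so the final formulas you arrive at match the statement; but to make this a proof rather than a gloss you would need to quote (or reprove) Waldspurger's Whittaker-model criterion precisely and check that your normalization of $\epsilon$-factors agrees with his.
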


\section{Multiplicity formula on restriction from $\widetilde{\GL}_{2}(E)$ to $\widetilde{\SL}_{2}(E)$} \label{higher multiplicity}
Let $\tilde{\pi}$ be an irreducible admissible genuine representation of $\widetilde{\GL}_{2}(E)$. Let $\mu$ be a character of $\tilde{Z}$ and $\tau$ an irreducible representation of $\widetilde{\SL}_{2}(E)$, which are compatible, such that $\mu\tau$ appears in $\tilde{\pi}$ restricted to $\widetilde{\GL}_{2}(E)_{+}$. We have
 \[
 \tilde{\pi}|_{\widetilde{\GL}_{2}(E)_{+}} = \bigoplus_{a \in E^{\times}/E^{\times 2}} (\mu^{a} \tau^{a})
 \]
 where $a \in E^{\times}/E^{\times 2}$ are elements of the split torus $T \cong E^{\times} \times E^{\times}$ of the form $diag(a,1)$. Since the restriction of $\mu \tau$ from $\widetilde{\GL}_{2}(E)_{+}$ to $\widetilde{\SL}_{2}(E)$ is $\tau$,  the multiplicity with which the representation $\tau$ appears in $\tilde{\pi}$, to be denoted by $m(\tilde{\pi}, \tau)$, is given by
 \[
m(\tilde{\pi}, \tau) = \# \{ a \in E^{\times}/E^{\times 2} : \tau^{a} \cong \tau \}.
 \]
\begin{lemma} \label{lemma:A}
For an irreducible admissible representation $\tau$ of $\widetilde{\SL}_{2}(E)$, and $a \in E^{\times}$, we have
\[
\tau \cong \tau^{a} \Longleftrightarrow \left\{ \begin{array}{lrl}
     (1) & \theta(\tau) \otimes \chi_{a} & \cong  \theta(\tau) \\
     (2) & \chi_{a}(-1) & =  1.
                                                                         \end{array}
                                                                         \right.
\]
\end{lemma}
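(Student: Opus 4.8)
The plan is to transport the isomorphism question to the theta side, where Theorem~\ref{theorem:A} provides complete control. The crucial structural fact is that the theta correspondence is a \emph{bijection}
\[
{\rm Irr}(\widetilde{\SL}_{2}(E)) \longleftrightarrow {\rm Irr}({\rm PGL}_{2}(E)) \, \bigsqcup \, {\rm Irr}(\PD^{\times}),
\]
and in particular injective. Since conjugation by $g(a)$ carries irreducible admissible genuine representations to irreducible admissible genuine representations, $\tau^{a}$ lies in the domain of $\theta$ and $\theta(\tau^{a})$ is defined. Injectivity then gives the clean reformulation $\tau \cong \tau^{a} \iff \theta(\tau) \cong \theta(\tau^{a})$, where the isomorphism on the right is understood inside the disjoint union, i.e.\ it requires $\theta(\tau)$ and $\theta(\tau^{a})$ to be representations of the \emph{same} group (both of ${\rm PGL}_{2}(E)$ or both of $\PD^{\times}$) as well as abstractly isomorphic.

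For the direction $(\Leftarrow)$, I would assume $(1)$ and $(2)$. Condition $(1)$ gives $\theta(\tau) \otimes \chi_{a} \cong \theta(\tau)$, whence in particular $\epsilon(\theta(\tau) \otimes \chi_{a}) = \epsilon(\theta(\tau))$; combined with $\chi_{a}(-1) = 1$ from $(2)$, this is exactly the equality $\epsilon(\theta(\tau) \otimes \chi_{a}) = \chi_{a}(-1)\,\epsilon(\theta(\tau))$ that triggers the first alternative of Theorem~\ref{theorem:A}. Hence $\theta(\tau^{a}) \cong \theta(\tau) \otimes \chi_{a} \cong \theta(\tau)$, and by injectivity $\tau^{a} \cong \tau$.

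For $(\Rightarrow)$, suppose $\tau \cong \tau^{a}$, so $\theta(\tau^{a}) \cong \theta(\tau)$ in the disjoint union. In the second alternative of Theorem~\ref{theorem:A} the correspondence switches the target group (${\rm PGL}_{2}(E) \leftrightarrow \PD^{\times}$), so $\theta(\tau^{a})$ and $\theta(\tau)$ would be representations of different groups and could not be isomorphic; therefore we must be in the first alternative. This yields both $\theta(\tau^{a}) \cong \theta(\tau) \otimes \chi_{a}$ --- so $\theta(\tau) \otimes \chi_{a} \cong \theta(\tau)$, which is $(1)$ --- and the equality $\epsilon(\theta(\tau) \otimes \chi_{a}) = \chi_{a}(-1)\,\epsilon(\theta(\tau))$. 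Feeding $(1)$ back into the latter gives $\epsilon(\theta(\tau)) = \chi_{a}(-1)\,\epsilon(\theta(\tau))$, and since $\theta(\tau)$ has trivial central character its $\epsilon$-factor is a nonzero sign, so we may cancel it to obtain $\chi_{a}(-1) = 1$, which is $(2)$.

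The only real subtlety --- and the step I expect to need the most care --- is the extraction of condition $(2)$: it relies on knowing that the relevant $\epsilon$-factor is nonzero (indeed $\pm 1$, as $\theta(\tau)$ is a self-dual representation of a group with trivial center), and on correctly ruling out the target-switching alternative of Theorem~\ref{theorem:A} by the observation that representations of ${\rm PGL}_{2}(E)$ and of $\PD^{\times}$ are never identified under the bijection. Everything else is a formal consequence of the injectivity of $\theta$ together with the dichotomy recorded in Theorem~\ref{theorem:A}.
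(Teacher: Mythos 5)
Your proof is correct and follows essentially the same path as the paper: both reduce the question to Theorem~\ref{theorem:A} via the injectivity of the theta correspondence into ${\rm Irr}({\rm PGL}_{2}(E)) \sqcup {\rm Irr}(\PD^{\times})$, and both rule out the group-switching alternative by noting that $\tau \cong \tau^{a}$ forces $\theta(\tau)$ and $\theta(\tau^{a})$ to live on the same group. The one place where you diverge is the extraction of condition $(2)$ in the forward direction: the paper gets $\chi_{a}(-1)=1$ immediately by comparing the central characters of $\tau$ and $\tau^{a}$ (conjugation by $g(a)$ twists the central character of $\widetilde{\SL}_{2}(E)$ at the preimage of $-I$ by $(a,-1)=\chi_{a}(-1)$), whereas you obtain it by feeding $(1)$ into the $\epsilon$-factor identity of Theorem~\ref{theorem:A} and cancelling the nonzero $\epsilon(\theta(\tau))$. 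Your route is valid --- $\epsilon$-factors are isomorphism invariants and never vanish, as you note --- but the central-character observation is more elementary and does not lean on any property of $\epsilon$-factors; it is worth keeping in mind, as the paper reuses exactly this kind of central-character bookkeeping in Lemma~\ref{lemma:B}.
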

\begin{proof}
It $\tau \cong \tau^{a}$, then considering the central characters on both sides, we find that $\chi_{a}(-1) =1$. Further, if $\tau \cong \tau^{a}$, then in particular, they both have $\theta$ lifts either to ${\rm PGL}_{2}(E)$ or $\PD^{\times}$, and $\theta(\tau) \cong \theta(\tau^{a})$. Thus from Theorem \ref{theorem:A} due to Waldspurger, we deduce the assertion in the lemma.
\end{proof}
\begin{corollary} \label{corollary:A2} The multiplicity of $\tau$ in $\tilde{\pi}$ is given by
\[
m(\tilde{\pi}, \tau) = \# \left\{ a \in E^{\times}/E^{\times 2} : \theta(\tau) \otimes \chi_{a} \cong \theta(\tau) \text{ and } \chi_{a}(-1)=+1 \right\}.
\]
\end{corollary}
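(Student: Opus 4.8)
The plan is to combine the multiplicity formula derived immediately above with the characterization of $\tau \cong \tau^a$ supplied by Lemma \ref{lemma:A}; the corollary is their direct consequence and requires essentially no new argument.

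I would begin from the identity
\[
m(\tilde{\pi}, \tau) = \# \{ a \in E^{\times}/E^{\times 2} : \tau^{a} \cong \tau \},
\]
which was established in the discussion preceding Lemma \ref{lemma:A} via \eqref{res:GL to GL+} together with Clifford and Mackey theory. The task then reduces to rewriting the defining condition $\tau^a \cong \tau$ of the set being counted.

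Next I would invoke Lemma \ref{lemma:A}, which asserts that for $a \in E^{\times}$ one has $\tau \cong \tau^a$ if and only if simultaneously $\theta(\tau) \otimes \chi_a \cong \theta(\tau)$ and $\chi_a(-1) = +1$. Substituting this equivalence into the counting set yields
\[
m(\tilde{\pi}, \tau) = \# \left\{ a \in E^{\times}/E^{\times 2} : \theta(\tau) \otimes \chi_{a} \cong \theta(\tau) \text{ and } \chi_{a}(-1)=+1 \right\},
\]
which is exactly the assertion.

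The one routine point is to verify that both conditions on the right depend on $a$ only through its image in $E^{\times}/E^{\times 2}$, so that the count is over a well-defined subset of $E^{\times}/E^{\times 2}$. This is immediate, since $\chi_a(x) = (a,x)$ is unchanged when $a$ is multiplied by a square, by the bilinearity of the Hilbert symbol, and hence so are both the twist $\theta(\tau) \otimes \chi_a$ and the sign $\chi_a(-1)$. There is thus no genuine obstacle: all the substance resides in Waldspurger's Theorem \ref{theorem:A} and in its translation through Lemma \ref{lemma:A}, and the corollary is simply their combination.
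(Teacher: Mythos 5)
Your proposal is correct and follows exactly the route the paper intends: the corollary is the immediate combination of the multiplicity formula $m(\tilde{\pi},\tau)=\#\{a\in E^{\times}/E^{\times 2}:\tau^{a}\cong\tau\}$ with the equivalence of Lemma \ref{lemma:A}, which is why the paper states it without a separate proof. Your added remark that both conditions depend on $a$ only modulo squares is a sensible (if routine) point of care.
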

\noindent It is well-known that for a representation $\pi$ of $\GL_{2}(E)$, cf. \cite{LL79}
\[
m(\pi) := \# \{ a \in E^{\times}/E^{\times 2} : \pi \cong \pi \otimes \chi_{a} \} \in \{1, 2, 4 \}.
\]
The condition $\chi_{a}(-1) = 1$ is automatic in some situations for example if $ -1 \in E^{\times 2}$. Thus we get $m(\tilde{\pi}, \tau)$ to be any of the following possibilities:
\[
m(\tilde{\pi}, \tau) = 1,2 \text{ or } 4
\]
for some $p$-adic field for any $p$, including $p=2$.

\section{A lemma on Waldspurger involution}
 We recall that for an irreducible admissible genuine representation $\tau$ of $\widetilde{\SL}_{2}(E)$, the central characters of $\tau$ and $\tau_{W}$ are different. The group $\GL_{2}(E)$, or what amounts to simply $E^{\times}$ sitting inside $\GL_{2}(E)$ as $\left\{ \left( \begin{matrix}e & 0 \\ 0 & 1 \end{matrix} \right) : e \in E^{\times} \right\}$, acts on the set of irreducible representations of $\widetilde{\SL}_{2}(E)$ denoted by $\tau \mapsto \tau^{a}$  for $a \in E^{\times}$. Since a similar action produces an $L$-packet for $\SL_{2}(E)$, whereas for $\widetilde{\SL}_{2}(E)$, one defines an $L$-packet by taking $\tau$ and $\tau_{W}$, we investigate in this section if it can happen that $\tau_{W} \cong \tau^{a}$ for some $ a \in E^{\times}$ and $\tau$ a discrete series representation of $\widetilde{\SL}_{2}(E)$.
\begin{lemma} \label{lemma:B}
Let $\tau$ be a discrete series representation of $\widetilde{\SL}_{2}(E)$. Let $\psi$ be a non-trivial additive character of $E$ such that $\tau$ has $\theta$ lift to ${\rm PGL}_{2}(E)$ with respect to $\psi$. Then there exists $a \in E^{\times}$ with $\tau^{a} \cong \tau_{W}$ if and only if for $\pi=\theta(\tau, \psi)$, we have 
\begin{enumerate}
\item[(i)] $\pi \cong \pi \otimes \chi_{a}$
\item[(ii)] $\chi_{a}(-1)=-1$.
\end{enumerate}
\end{lemma}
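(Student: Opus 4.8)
The plan is to push everything through the $\theta$-correspondence and the Waldspurger square, turning the assertion about $\widetilde{\SL}_{2}(E)$ into one about ${\rm PGL}_{2}(E)$ and $\PD^{\times}$. The starting point is the commutative diagram defining $W$: since $\tau$ is discrete series with $\theta$-lift $\pi = \theta(\tau,\psi)$ to ${\rm PGL}_{2}(E)$, commutativity gives $\theta(\tau_{W}) = \pi^{JL}$, a discrete series representation of $\PD^{\times}$. Because the $\theta$-correspondence is a bijection onto ${\rm Irr}({\rm PGL}_{2}(E)) \sqcup {\rm Irr}(\PD^{\times})$, the isomorphism $\tau^{a} \cong \tau_{W}$ is equivalent to the conjunction of two facts: that $\tau^{a}$ has its $\theta$-lift to $\PD^{\times}$, and that $\theta(\tau^{a}) \cong \pi^{JL}$.

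Next I would invoke Theorem \ref{theorem:A} to compute $\theta(\tau^{a})$, splitting into the two cases governed by the $\epsilon$-factor sign. If $\epsilon(\pi \otimes \chi_{a}) = \chi_{a}(-1)\,\epsilon(\pi)$, then $\tau^{a}$ lifts to ${\rm PGL}_{2}(E)$, the same group as $\tau$, so $\theta(\tau^{a})$ and $\theta(\tau_{W}) = \pi^{JL}$ live on different groups and $\tau^{a} \ncong \tau_{W}$; this case is therefore excluded. Hence $\tau^{a} \cong \tau_{W}$ forces the opposite sign $\epsilon(\pi \otimes \chi_{a}) = -\chi_{a}(-1)\,\epsilon(\pi)$, and in that case Theorem \ref{theorem:A} yields $\theta(\tau^{a}) = \pi^{JL} \otimes \chi_{a}$ as a representation of $\PD^{\times}$.

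It then remains to unwind $\pi^{JL} \otimes \chi_{a} \cong \pi^{JL}$. Using that the Jacquet-Langlands correspondence commutes with twisting by characters of $E^{\times}$ and is injective on discrete series, this is equivalent to $\pi \otimes \chi_{a} \cong \pi$, which is condition (i). Finally I would feed (i) back into the sign: if $\pi \cong \pi \otimes \chi_{a}$ then $\epsilon(\pi \otimes \chi_{a}) = \epsilon(\pi)$, so the required identity $\epsilon(\pi \otimes \chi_{a}) = -\chi_{a}(-1)\,\epsilon(\pi)$ collapses to $\chi_{a}(-1) = -1$, which is condition (ii). Conversely, assuming (i) and (ii) reproduces the correct sign, places $\theta(\tau^{a}) = \pi^{JL}\otimes\chi_{a}$ on $\PD^{\times}$, and gives $\pi^{JL}\otimes\chi_{a} \cong \pi^{JL} = \theta(\tau_{W})$, whence $\tau^{a} \cong \tau_{W}$ by injectivity of the correspondence.

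The main obstacle I anticipate is the bookkeeping of the $\epsilon$-factor sign: one must verify that the twisting isomorphism (i) is precisely what converts the ``lifts to $\PD^{\times}$'' sign condition into the numerical value $\chi_{a}(-1) = -1$, so that (i) and (ii) are jointly sufficient and not merely necessary. The two standard inputs that make this mesh cleanly are the compatibility of Jacquet-Langlands with character twists and the identity $\epsilon(\pi \otimes \chi_{a}) = \epsilon(\pi)$ whenever $\pi \cong \pi \otimes \chi_{a}$; both are routine but are exactly the hinges on which the equivalence turns.
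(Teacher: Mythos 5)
Your proposal is correct and follows essentially the same route as the paper: use the Waldspurger diagram to identify $\theta(\tau_{W}) = \pi^{JL}$, rule out the case $\epsilon(\pi \otimes \chi_{a}) = \chi_{a}(-1)\epsilon(\pi)$ since then $\tau^{a}$ lifts to ${\rm PGL}_{2}(E)$ rather than $\PD^{\times}$, and in the remaining case reduce $\tau^{a} \cong \tau_{W}$ to $\pi^{JL} \otimes \chi_{a} \cong \pi^{JL}$ together with the $\epsilon$-sign condition, which combine to give (i) and (ii). You in fact make explicit the two hinges (compatibility of Jacquet--Langlands with twists, and $\epsilon(\pi \otimes \chi_{a}) = \epsilon(\pi)$ when $\pi \cong \pi \otimes \chi_{a}$) that the paper compresses into the phrase ``the equations (i) and (ii) can be combined.''
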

\begin{proof}
Let $\pi = \theta(\tau, \psi)$ and $\theta(\tau_{W}, \psi)= \pi^{JL}$, where $\pi^{JL}$ denotes the representation of $\PD^{\times}$ which is associated to $\pi$ via the Jacquet-Langlands correspondence. From Theorem \ref{theorem:A} it follows that if $\epsilon(\pi \otimes \chi_{a}) = \chi_{a}(-1) \epsilon(\pi)$, then $\tau^{a}$ lift to ${\rm PGL}_{2}(E)$ and not to $\PD^{\times}$ and hence $\tau^{a}$ cannot be isomorphic to $\tau_{W}$. Thus if $\tau^{a}$ were isomorphic to $\tau_{W}$, then we must have $\epsilon(\pi \otimes \chi_{a}) = - \chi_{a}(-1) \epsilon(\pi)$. In this case, by Theorem \ref{theorem:A}, $\tau^{a}$ lifts to $\PD^{\times}$ and is $\pi^{JL} \otimes \chi_{a}$. Therefore 
\[
\tau^{a} \cong \tau_{W} \Longleftrightarrow \left\{ \begin{array}{lrcl}
(i) & \epsilon(\pi \otimes \chi_{a}) &=& - \chi_{a}(-1) \epsilon(\pi) \\
(ii) & \pi^{JL} &\cong& \pi^{JL} \otimes \chi_{a}.
\end{array} \right.
\] 
The equations (i) and (ii) can be combined to say that 
\[
\tau^{a} \cong \tau_{W} \Longleftrightarrow \left\{ \begin{array}{lrl}
 (i) & \pi & \cong \pi \otimes \chi_{a} \\
 (ii) & \chi_{a}(-1) & = -1. 
 \end{array}
 \right. 
\]
This completes the proof of the lemma.
\end{proof}
As a consequence of Lemma \ref{lemma:A} and Lemma \ref{lemma:B}, we obtain:
\begin{corollary}
Let $\tau$ be an irreducible genuine discrete series representation of $\widetilde{\SL}_{2}(E)$. Let $m_{1} = \# \{ \tau^{a}, (\tau_{W})^{a} \mid a \in E^{\times} \}$, and let $m_{2}$ be the cardinality of the $L$-packet of $\SL_{2}(E)$ determined by $\theta(\tau, \psi)$. Then
\[
m_{1} \cdot m_{2} = 2 [E^{\times} : E^{\times 2}].
\]
\end{corollary}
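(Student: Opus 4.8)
The plan is to reduce both $m_{1}$ and $m_{2}$ to the orders of certain subgroups of the finite abelian group $G := E^{\times}/E^{\times 2}$, whose order equals $[E^{\times} : E^{\times 2}]$. First I would record that the action $\tau \mapsto \tau^{a}$ factors through $G$: if $a \in E^{\times 2}$ then $g(a)$ lies in $\widetilde{\GL}_{2}(E)_{+} = \tilde{Z} \cdot \widetilde{\SL}_{2}(E)$, and since $\tilde{Z}$ centralizes $\widetilde{\SL}_{2}(E)$ this conjugation fixes the isomorphism class of $\tau$. Thus $G$ acts on ${\rm Irr}(\widetilde{\SL}_{2}(E))$ and $m_{1}$ is exactly the cardinality of the union of the $G$-orbits of $\tau$ and of $\tau_{W}$. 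After possibly replacing $\psi$ (or interchanging the roles of $\tau$ and $\tau_{W}$, which leaves the set defining $m_{1}$ unchanged), I may assume as in Lemma~\ref{lemma:B} that $\pi := \theta(\tau,\psi)$ is a representation of ${\rm PGL}_{2}(E)$. By Labesse--Langlands \cite{LL79}, the $L$-packet of $\SL_{2}(E)$ determined by $\pi$ has cardinality $m_{2} = |H|$, where $H := \{ a \in G : \pi \cong \pi \otimes \chi_{a} \}$ is a subgroup of $G$ (here $\chi_{a}$ automatically being quadratic matches the quadratic characters indexed by $G$).

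Second, I would compute the two stabilizers. Let $K := \{ a \in G : \chi_{a}(-1) = 1 \}$, the kernel of the homomorphism $G \to \{\pm 1\}$ given by $a \mapsto \chi_{a}(-1)=(a,-1)$; this subgroup has index $1$ or $2$ in $G$. Lemma~\ref{lemma:A} gives immediately that the $G$-stabilizer of $\tau$ is $H \cap K$. For $\tau_{W}$ I would use that $\theta(\tau_{W},\psi) = \pi^{JL}$ together with the compatibility of the Jacquet--Langlands correspondence with character twists, namely $\pi^{JL} \otimes \chi_{a} \cong \pi^{JL} \iff \pi \otimes \chi_{a} \cong \pi$; applying Lemma~\ref{lemma:A} to $\tau_{W}$ then shows that $\tau_{W}$ has the \emph{same} stabilizer $H \cap K$. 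Consequently both $G$-orbits have the common size $|G|/|H \cap K|$.

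Finally I would split into two cases according to Lemma~\ref{lemma:B}, which asserts that $\tau^{a} \cong \tau_{W}$ for some $a$ exactly when there is $a \in H$ with $\chi_{a}(-1) = -1$, that is, exactly when $H \not\subseteq K$. If $H \subseteq K$, the orbits of $\tau$ and $\tau_{W}$ are disjoint and $H \cap K = H$, so $m_{1} = 2|G|/|H| = 2|G|/m_{2}$. If $H \not\subseteq K$, then $\tau_{W}$ lies in the orbit of $\tau$, the two orbits coincide, and the restriction of $a \mapsto (a,-1)$ to $H$ is non-trivial, whence $|H \cap K| = |H|/2$; thus $m_{1} = |G|/|H\cap K| = 2|G|/m_{2}$. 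In either case $m_{1} \cdot m_{2} = 2|G| = 2[E^{\times} : E^{\times 2}]$, as claimed.

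The step I expect to be the main obstacle is the identification $\mathrm{Stab}_{G}(\tau_{W}) = \mathrm{Stab}_{G}(\tau)$, which rests squarely on the twist-compatibility of Jacquet--Langlands, and the careful bookkeeping of the index-two argument in the second case (that $a \mapsto (a,-1)$ is non-trivial on $H$ precisely when $H \not\subseteq K$). One must also check that the reduction to the case $\theta(\tau,\psi) \in {\rm Irr}({\rm PGL}_{2}(E))$ is harmless, which follows since exactly one of $\tau, \tau_{W}$ lifts to ${\rm PGL}_{2}(E)$ for the given $\psi$ while the set $\{\tau^{a}, (\tau_{W})^{a}\}$ defining $m_{1}$ is symmetric in $\tau \leftrightarrow \tau_{W}$.
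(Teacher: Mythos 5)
Your proof is correct and takes the route the paper intends: the paper states this corollary without proof as a direct consequence of Lemma \ref{lemma:A} and Lemma \ref{lemma:B}, and your argument is precisely the careful deduction from those two lemmas (stabilizers $H \cap K$ for both $\tau$ and $\tau_{W}$, then the two cases $H \subseteq K$ and $H \not\subseteq K$). The points you flag as delicate --- the twist-compatibility of Jacquet--Langlands and the index-two count when $H \not\subseteq K$ --- are handled correctly, and the reduction to $\theta(\tau,\psi) \in {\rm Irr}({\rm PGL}_{2}(E))$ is indeed harmless for the reason you give.
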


\begin{corollary}
If $\pi$ is a principal series representation of ${\rm PGL}_{2}(E)$ with $\pi \otimes \chi_{a} \cong \pi$, then $\pi$ must be the principal series representation $Ps(\mu, \mu \chi_{a})$ with $\mu^{2} = \chi_{a}$, and as a result $\chi_{a}(-1) = \mu^{2}(-1) = 1$. Therefore if $\tau$ is an irreducible admissible representation of $\widetilde{\SL}_{2}(E)$ with $\theta(\tau)$ an irreducible principal series representation of ${\rm PGL}_{2}(E)$, then for $m_{1} = \# \{ \tau^{a} \mid a \in E^{\times} \}$, and $m_{2}$ the cardinality of the $L$-packet of $\SL_{2}(E)$ determined by $\theta(\tau, \psi)$,
\[
m_{1} \cdot m_{2} = [E^{\times} : E^{\times 2}].
\]
\end{corollary}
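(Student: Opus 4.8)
The plan is to dispose of the structural claim about principal series by a direct computation with inducing characters, and then to combine it with the orbit--stabilizer bookkeeping already underlying Section~\ref{higher multiplicity}.

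First I would prove the structural assertion. Viewing an irreducible principal series of ${\rm PGL}_2(E)$ as a representation of $\GL_2(E)$ with trivial central character, it has the form $Ps(\mu,\mu^{-1})$ for some character $\mu$ of $E^\times$, and twisting yields $\pi\otimes\chi_a=Ps(\mu\chi_a,\mu^{-1}\chi_a)$. The standard criterion that two irreducible principal series agree exactly when their unordered pairs of inducing characters coincide forces $\{\mu,\mu^{-1}\}=\{\mu\chi_a,\mu^{-1}\chi_a\}$. Setting aside the trivial solution $\chi_a=1$, the only remaining case is $\mu\chi_a=\mu^{-1}$, i.e.\ $\mu^2=\chi_a$; since then $\mu^{-1}=\mu\chi_a$, indeed $\pi=Ps(\mu,\mu\chi_a)$. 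The claim $\chi_a(-1)=\mu^2(-1)=1$ is then immediate, as $\mu(-1)\in\{\pm1\}$ gives $\mu^2(-1)=1$.

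Next I would compute $m_2$. Writing $\pi=\theta(\tau,\psi)$ and viewing it as a representation of $\GL_2(E)$ with trivial central character, the $L$-packet of $\SL_2(E)$ it determines consists of the irreducible constituents of $\pi|_{\SL_2(E)}$, so by the restriction theory of \cite{LL79} one has $m_2=\#\{a\in E^\times/E^{\times2}:\pi\cong\pi\otimes\chi_a\}$. On the other hand, Corollary~\ref{corollary:A2} gives $\#\{a:\tau^a\cong\tau\}=\#\{a:\pi\otimes\chi_a\cong\pi\ \text{and}\ \chi_a(-1)=1\}$. The structural part now enters decisively: since $\pi$ is an irreducible principal series, $\pi\cong\pi\otimes\chi_a$ already forces $\chi_a(-1)=1$, so the sign constraint is vacuous and $m_2=\#\{a:\tau^a\cong\tau\}$.

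Finally, applying orbit--stabilizer to the action of the finite group $E^\times/E^{\times2}$ on isomorphism classes $\tau\mapsto\tau^a$, the orbit of $\tau$ has size $m_1=\#\{\tau^a\mid a\in E^\times\}$ and its stabilizer has size $\#\{a:\tau^a\cong\tau\}=m_2$, whence $m_1\cdot m_2=[E^\times:E^{\times2}]$. I expect the real subtlety to lie not in any one calculation but in the bookkeeping separating this from the previous corollary: there $\theta(\tau)$ is a discrete series and the sign $\chi_a(-1)=-1$ genuinely occurs, producing $\tau^a\cong\tau_W\not\cong\tau$ and the factor $2$, whereas here the structural part rules this sign out. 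One should therefore also record that $\theta(\tau)$ being a principal series makes $\tau$ non-discrete-series, hence a fixed point of the Waldspurger involution with $\tau_W\cong\tau$, so that $\{\tau^a\mid a\in E^\times\}$ is genuinely a single orbit and the factor $2$ collapses to $1$.
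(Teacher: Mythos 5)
Your proposal is correct and follows essentially the route the paper intends: the unordered-pair computation with inducing characters forcing $\mu^2=\chi_a$ (hence $\chi_a(-1)=1$) is exactly the argument embedded in the statement of the corollary, and the rest is Lemma \ref{lemma:A} together with the Labesse--Langlands count $m_2=\#\{a:\pi\cong\pi\otimes\chi_a\}$ and orbit--stabilizer on $E^\times/E^{\times 2}$. Your closing remark explaining why the factor $2$ of the discrete-series corollary collapses here (since $\tau$ is a fixed point of the Waldspurger involution) is a correct and worthwhile addition that the paper leaves implicit.
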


\end{document}